  \newlength{\defbaselineskip}
  \newcommand{\setlinespacing}[1]%
                               {\setlenght{\baselineskip}{#1 \defbaselineskip}}
  \theoremstyle{plain}
  \newtheorem{thm}{Theorem}[section]
  \newtheorem{cor}[thm]{Corollary}
  \newtheorem{rem}[thm]{Remark}
  \theoremstyle{definition}
  \newtheorem{defi}[thm]{Definition}
  \newtheorem{exm}[thm]{Example}
  \numberwithin{equation}{section}
\begin{document}
\begin{center}
{\bf Minimal projections with respect to various norms}
\end{center}
\vspace{.15 cm}
\begin{center}
\small{Asuman G\"{u}ven AKSOY and Grzegorz LEWICKI}
\end{center}

  \date{October 7, 2009}
\mbox{~~~}\\
\mbox{~~~}\\
\small\mbox{~~~~}{\bf Abstract} {\footnotesize  We will show that a theorem of Rudin \cite{wr1}, \cite{wr},
permits us to determine minimal projections not only with respect to the operator norm but with respect to quasi-norms in operators ideals and numerical radius in many concrete cases.  }\\

\
\\
\footnotetext{{\bf Mathematics Subject Classification (2000):}
41A35, 41A65, 47A12, 47H10. \vskip1mm {\bf Key words: } Numerical
radius, Minimal projection, Quasi-normed operator ideal .}

\section{ Introduction.}
Let $X$ be a Banach space over $\mathbb{R}$ or $\mathbb{C}$. We write $B_{X}$ for
the closed unit ball and $S_{X}$ for the unit sphere of $X$.
 The dual space is denoted by $X^{*}$ and the Banach algebra of all
continuous linear operators on $X$ is denoted by $B(X)$.
\begin{defi}
The \textit{numerical range} of $T\in B(X)$ is defined by
$$W(T)= \{ x^{*}(Tx)  :~x\in S_{X},~x^{*}\in S_{X^{*}},~x^{*}(x)=1\}\cdot$$
The \textit{numerical radius} of $T$ is then given by
$$\parallel T \parallel_{w}=\sup\{\vert \lambda\vert : ~\lambda\in W(T)\}\cdot$$
\end{defi}
Clearly, $\parallel . \parallel_{w}$ is a semi-norm on $B(X)$ and $\parallel T \parallel_{w} \le \Vert T\Vert$ for
all $T\in B(X)$.
\\ The \textit{numerical index} of $X$ is defined by
$$n(X)=\inf\{\parallel T \parallel_{w} :~ T\in S_{B(X)}\}\cdot$$
Equivalently, the numerical index $n(X)$ is the greatest constant $k
\geq 0$ such that $k\|T\| \leq \parallel T \parallel_{w}$ for every $T \in B(X)$. Note
also that $0 \leq n(X) \leq 1$, and $n(X) > 0$ if and only if
$\parallel \cdot \parallel_{w}$
and $\|\cdot\|$ are equivalent norms.\\
The concept of numerical index was first introduced by Lumer
\cite{lg} in 1968. Since then much attention has been paid to this
constant of equivalence between the numerical radius and the usual
norm in the Banach algebra of all bounded linear operators of a
Banach space. Classical references here are  \cite{bff-dj1},
 \cite{bff-dj2}. For recent results we refer the reader to
\cite{aag-cbl},\cite{aga-ed-kham} \cite{ee},
\cite{fc-mm-pr},\cite{gke-rdkm}, \cite{lg-mm-pr}, \cite{mm}.\\
   Let $\mathcal{L}$ denote the class of all operators between arbitrary Banach spaces. An \emph{Operator ideal} $\mathcal{U}$ is, roughly speaking, a subclass of
$\mathcal{L}$ such that $$\mathcal{U}+ \mathcal{U}= \mathcal{U}\,\,\, \mbox{and}\,\,\,\mathcal{L} \circ \mathcal{U}\circ \mathcal{L}= \mathcal{U}$$ The theory of normed operator ideals was founded by A. Grothendick and R. Schatten. Basic examples are the ideals of nuclear, compact, and absolutely summing operators. For more details on operator ideals see \cite{ap}. On every operator ideal there could be many different quasi-norms; however the ``good" quasi-norms are selected by the completeness of the corresponding topology. Therefore, one can state that  every operator ideal up to equivalence has one reasonable quasi-norm.
\begin{defi}
\label{opiddef}
Let $\mathcal {U}$ be an operator ideal. A map $$\mathcal{Q} : \mathcal {U}\rightarrow \mathbb{R}^+ $$ is called a \emph{quasi-norm} if the following conditions are satisfied:
\begin{enumerate}
\item $\mathcal{Q}( I_V)= 1$, where $V$ denotes one dimensional Banach space.
\item There exists a constant $C \geq 1$ such that
\begin{equation}
 \label{C}
\mathcal{Q}(S_1+S_2) \leq C [\mathcal{Q}(S_1)+\mathcal{Q}(S_2)]\,\,\, \mbox{for}\,\,\, S_1,S_2 \in \mathcal {U}(X,Y)
\end{equation}
\item If $T \in B(X_0, X), \,\, S\in B(X,Y)\,\, \mbox{and} \,\,R\in B(Y,Y_0)$, then $$ \mathcal{Q}(RST) \leq \| R \| \mathcal{Q}(S) \| T \|$$
\end{enumerate}
\end{defi}
Note that a quasi-operator ideal $(\mathcal U ,\mathcal{Q})$ is an operator ideal $\mathcal{U}$ with quasi-norm $\mathcal Q$ such that $\mathcal{Q}(\lambda S)= |\lambda| \mathcal{Q}(S)$ for $S \in \mathcal{U}(X,Y)$  with $\lambda \in \mathbb{R}$. Furthermore,  we have  $$ \| S \|_w \leq \| S \| \leq \mathcal{Q}(S)$$ for all $S \in \mathcal{U}$.
\\
If $X$ is a Banach space and $V$ is a linear, closed subspace of $X,$ by $\mathcal{P}(X,V)$ we denote the set of all linear projections continuous with respect to the operator norm. Recall that an operator $P:X\rightarrow V$ is called a projection, if $ P|_{V} = id_{V}.$
A projection $ P_o \in \mathcal{P}(X,V)$ is called minimal if
$$
\| P_o \| = \inf \{ \|P\|: P\in \mathcal{P}(X,V)\} = \lambda(V,X).
$$
Minimal projections were extensively studied by many authors in the context of functional analysis and approximation theory (see e.g.
\cite{aag-cbl}, \cite{BE1}, \cite{chalL1}-\cite{CM1}, \cite{CF}-\cite{CMO},\cite{FMW},\cite{IS1}-\cite{LZ1}, \cite{MI1}-\cite{OL},\cite{wr1},\cite{SK1}-\cite{SK3}). Mainly the problems of existence of minimal projections, uniqueness of minimal projections, finding concrete formulas for minimal projections and estimates of the constant $ \lambda(V,X)$ were considered.
It is worth noting that one of the main tools for finding minimal projection effectively is the so-called Rudin's Theorem (see \cite{wr1} or \cite{wr}).

Now assume that $X$ is a Banach space, $N$ is any norm or semi-norm on an operator ideal $\mathcal{U}(X) \subset B(X)$ and $V$ is a subspace of $X$ such that $id|_{V} \in \mathcal{U}(X).$
Denote by $ \mathcal{P}_N(X,V)$ the set of all linear projections from $X$ onto $V$ such that $P \in \mathcal{U}(X).$
Let us define
$$
\lambda_N(V,X)= \inf \{ N(P) : P \in \mathcal{P}_N(X,V) \}
.$$
We put $\lambda_N(V,X) = + \infty $ if $\mathcal{P}_N(X,V) = \emptyset$.
A projection $ P_o \in \mathcal{P}_N(X,V)$ is called N-minimal if
$$
N(P_o)= \lambda_N(V,X).
$$
In the following we consider $N$ as the numerical radius $\| . \|_w$  or the quasi-norm norm $\mathcal{Q}( . )$ and we will show that Rudin's Theorem can be applied to obtain N-minimal projections effectively. Although our proofs follow from Rudin's result without much difficulty,  applications given in the last section justifiys the reason one may want to study minimal projections in this context.
\\ It is worth mentioning that we do not know any paper (with the exception of \cite{aag-cbl}) concerning minimal projections with respect to norms different than the operator norm.
In fact, in \cite{aag-cbl} a characterization of minimal numerical-radius extensions of operators from a normed linear space $X$ onto its finite dimensional subspaces and comparison with minimal operator-norm extension are given.
\section{Main Results }

One of the key theorems for minimal projections is due to W. Rudin (see page 127 of \cite{wr1} or \cite{wr}). This theorem was motivated by an earlier result of
Lozinki\v{i}  (see e.g \cite{che}, p. 216 or \cite{LZ1}) concerning the minimality of the the classical n-th Fourier projection in $ \mathcal{P}(C(2\pi), \pi_n),$ where $C(2\pi)$
denotes the space of all $2\pi$-periodic real-valued functions equipped with the supremum norm and $\pi_n$ is the space of all trigonometric polynomials of degree less than or equal to $n.$
The setting for this theorem is as follows. $X$ is a Banach space and $G$ is a compact topological group. Defined on $X$ is a set $\mathcal {A}$ of all bounded linear, bijective operators in a way that $\mathcal {A}$  is algebraically isomorphic to $G$. The image of $g \in G$ under this isomorphism will be denoted by $T_g$. We will assume that the map $G\times X \rightarrow X$ defined as $(g,x)\mapsto T_{g}x$ is continuous. A subspace $V$ of $X$ is called invariant under $G$ if $T_g V\subset V$ for all $g\in G$.
\begin{thm}
\label{Rudin}
Let $X$ and $G$ satisfy the above hypotheses, and let $V$ be a closed subspace of $X$ which is invariant under $G$. If there exists a bounded projection $P$ of $X$ onto $V$, then there exists a bounded projection $Q$ of $X$ onto $V$ which commutes with $G$.
\end{thm}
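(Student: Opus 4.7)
The plan is to produce $Q$ by averaging $P$ over $G$ against the normalized Haar measure $\mu$ on the compact group $G$, a construction going back to Weyl. Concretely, I would set
\[
Q(x) = \int_{G} T_{g}^{-1} P T_{g}(x) \, d\mu(g), \qquad x \in X,
\]
where the integral is understood as a Bochner integral in $X$. The algebraic isomorphism $g \mapsto T_g$ forces $T_{g^{-1}} = T_g^{-1}$, and composing the joint-continuity hypothesis with the continuous inversion map of $G$ shows that $g \mapsto T_g^{-1} x$ is continuous for each fixed $x$. Hence the integrand $g \mapsto T_g^{-1} P T_g x$ is a continuous $X$-valued function on the compact set $G$, and is therefore Bochner integrable.

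Before the integral can be estimated I would verify that $M := \sup_{g \in G}\|T_g\|$ and $M' := \sup_{g \in G}\|T_g^{-1}\|$ are finite. The orbit $\{T_g x : g \in G\}$ is the continuous image of the compact set $G$ and hence norm-bounded for each $x$, so the uniform boundedness principle gives $M < \infty$; applying the same reasoning to $g \mapsto T_g^{-1}$ yields $M' < \infty$. Consequently $\|Q(x)\| \le M M' \|P\|\,\|x\|$, so $Q \in B(X)$.

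Next I would show that $Q$ is a projection onto $V$. Invariance of $V$ under $G$ applied at the point $g^{-1}$ gives $T_g^{-1} V \subset V$ for every $g$, so the integrand lies in $V$ throughout; as $V$ is closed, the Bochner integral $Q(x)$ also belongs to $V$. For $v \in V$, invariance gives $T_g v \in V$, so $P T_g v = T_g v$ and $T_g^{-1} P T_g v = v$; integrating the constant function $v$ against the probability measure $\mu$ yields $Q(v) = v$.

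The commutation relation is the payoff of the averaging, and follows from the (right) translation invariance of $\mu$. For $h \in G$, writing $T_g T_h = T_{gh}$, substituting $g' = g h$, and using the identity $T_g^{-1} = T_h T_{g'}^{-1}$, I obtain
\[
Q(T_h x) = \int_{G} T_g^{-1} P T_{gh} x \, d\mu(g) = T_h \int_{G} T_{g'}^{-1} P T_{g'} x \, d\mu(g') = T_h Q(x).
\]
The main obstacle is not the algebraic manipulation but the analytic bookkeeping around the Bochner integral: rigorously interchanging the bounded operator $T_h$ with integration against $\mu$, using closedness of $V$ to place the integral inside $V$, and extracting the uniform bound on $\{T_g\}_{g \in G}$ from the mere joint continuity of the action. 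All of this is standard once the Haar measure framework is set up, so the theorem ultimately reduces to the soft averaging construction described above.
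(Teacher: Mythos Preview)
Your proof is correct and follows exactly the averaging construction the paper itself cites: the authors only sketch the idea---obtain $Q$ by integrating $T_g^{-1} P T_g$ against the normalized Haar measure and refer to Rudin for the details---so your argument is simply a fleshed-out version of the same route. The analytic points you flag (continuity and Bochner integrability of the integrand, uniform boundedness of $\{T_g\}$ via Banach--Steinhaus, closedness of $V$, and translation invariance of $\mu$ for the commutation identity) are precisely the ingredients required, and nothing is missing.
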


 The idea of the proof of the above theorem is to obtain $Q$ by averaging the operators $T_g^{-1} P T_g$ with respect to Haar measure $\mu$ on $G$. For details see
\cite{wr}.
\\ Now assume $X$ has a norm which is an isometry for the maps in $\mathcal{A}$, and all the hypotheses of Theorem 2.1 are met, then we can claim the following corollary which is clearly a stronger result than Theorem 2.1.
\begin{cor}
\label{unicity}
If there is a \textbf{unique} projection $Q: X\rightarrow V$ which commutes with $G$, ($Q\circ T_g = T_g \circ Q$) and $V$ is separable
then for any $P \in \mathcal{P}(X,V)$  the projection $Q$ defined by

$$ Q x= \displaystyle  \int _{G} (T_{g} ^{-1}  P  T_g)x \, d\mu (g) $$
is a \textbf{minimal projection} of $X$ onto $V$ with respect to the operator norm.
Here $\mu$ denotes the normalized ($\mu(G) = 1)$ Haar measure on $G$ and the above integral is understood as the Bochner integral. (By our assumptions concerning Rudin's Theorem, for any $x \in X$ the function
$g \rightarrow (T_{g} ^{-1}  P  T_g)x$ is continuous on $G$ and by the compactness of $G$ is Bochner integrable.)
\end{cor}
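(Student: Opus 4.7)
The plan is to establish four things in turn: (i) the operator $Q$ given by the Bochner integral is a bounded linear projection from $X$ onto $V$, (ii) it commutes with every $T_g$, (iii) its operator norm is at most $\|P\|$, and (iv) by the uniqueness hypothesis the same $Q$ is obtained from \emph{any} choice of starting projection, which upgrades the norm bound in (iii) to minimality.

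First I would check $Q$ is a projection onto $V$. The hypotheses of Rudin's theorem already ensure that $g\mapsto T_g^{-1}PT_g x$ is continuous on the compact group $G$, so the Bochner integral exists, and since $V$ is closed, invariant under $G$, and separable (so the Bochner integrand takes values in a closed separable subspace of $V$), $Qx\in V$. Idempotency on $V$ is immediate: for $x\in V$, invariance gives $T_g x\in V$, so $PT_gx = T_gx$, hence $T_g^{-1}PT_gx = x$ for every $g$, and $\mu(G)=1$ yields $Qx=x$. Thus $Q\in \mathcal{P}(X,V)$.

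Next I would verify $T_h Q = QT_h$ for each $h\in G$. Pulling the bounded operator $T_h$ through the Bochner integral gives
\[
T_hQx=\int_G T_hT_g^{-1}PT_g x\,d\mu(g),
\]
while using the group homomorphism $T_{gh}=T_gT_h$,
\[
QT_hx=\int_G T_g^{-1}PT_{gh}x\,d\mu(g).
\]
Substituting $g\mapsto gh^{-1}$ and invoking the translation invariance of Haar measure on the compact group $G$, the two integrals agree. For the operator-norm bound I use that each $T_g$ is an isometry (hence so is $T_g^{-1}$), so $\|T_g^{-1}PT_g\|=\|P\|$, and the standard Bochner estimate
\[
\|Qx\|\le \int_G\|T_g^{-1}PT_g x\|\,d\mu(g)\le \|P\|\,\|x\|
\]
yields $\|Q\|\le\|P\|$.

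Finally, for minimality, I would run the same averaging construction starting from an arbitrary $P'\in \mathcal{P}(X,V)$, producing a projection $Q'\in \mathcal{P}(X,V)$ that commutes with $G$ and satisfies $\|Q'\|\le \|P'\|$. By the uniqueness hypothesis $Q'=Q$, so $\|Q\|\le\|P'\|$ for every $P'\in \mathcal{P}(X,V)$, hence $\|Q\|\le \lambda(V,X)$; since $Q$ itself lies in $\mathcal{P}(X,V)$ the reverse inequality is trivial, so $\|Q\|=\lambda(V,X)$.

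I do not expect any serious conceptual obstacle; the proof is essentially an unwinding of Rudin's construction, so the only delicate points are the measure-theoretic bookkeeping (continuity and separable-range for the Bochner integral, commuting of $T_h$ with the integral, and translation invariance of $\mu$). The role of the separability of $V$ is simply to put the integrand into the classical Bochner framework.
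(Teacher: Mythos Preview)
Your proposal is correct and follows exactly the standard Rudin averaging argument that the paper itself invokes (the paper does not give its own proof here but simply refers to \cite{wr} and \cite{che-light}); steps (i)--(iv) are precisely the ingredients one finds in those references, so there is nothing to add.
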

For the proof of above corollary we refer to \cite{wr} and \cite{che-light}. For more details about Bochner integrals, see \cite{che-light}.
 Now we show that Theorem \ref{Rudin} can be applied to find $N$-minimal projections.
\begin{thm}
\label{numradius}
Suppose all the hypotheses of Theorem \ref{Rudin} are satisfied, $V$ is separable and the maps $\mathcal{A}$ are to be isometries. If $P$ is any projection in numerical radius from $X$ onto $V$, then the projection $Q_P$ defined as
$$
Q_P  x= \displaystyle  \int _{G} (T_{g}^{-1} P  T_g)x \,d\mu (g)
$$
satisfies
$$
\|Q_P\|_w \leq \|P\|_w.
$$
Here $\mu$ is the normalized Haar measure on $G$ and the above integral is understood as the Bochner integral.
\end{thm}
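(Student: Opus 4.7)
The plan is to exploit the $G$-isometry assumption to rewrite $x^{*}(T_g^{-1}PT_gx)$ as an element of the numerical range $W(P)$ for each $g\in G$, and then to apply the continuity of $x^{*}$ inside the Bochner integral. First I would note that $Q_P$ is a genuine projection of $X$ onto $V$: for $v\in V$, invariance of $V$ gives $T_g v\in V$, hence $PT_g v=T_g v$ and $T_g^{-1}PT_g v=v$, so integrating against the probability measure $\mu$ yields $Q_P v=v$. Moreover $Q_P x\in V$ for every $x\in X$ because $V$ is closed and the integrand is $V$-valued. Thus the entire content of the theorem is the numerical-radius bound.

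For the bound, fix $x\in S_X$ and $x^{*}\in S_{X^{*}}$ with $x^{*}(x)=1$. The crucial observation is that, because each $T_g$ is a surjective linear isometry, its inverse $T_g^{-1}$ is also an isometry, and the dual operator $(T_g^{-1})^{*}$ is an isometry of $X^{*}$. Setting $y:=T_g x$ and $y^{*}:=x^{*}\circ T_g^{-1}$, one then verifies $y\in S_X$, $y^{*}\in S_{X^{*}}$, and $y^{*}(y)=x^{*}(x)=1$, while
\[
 x^{*}\bigl(T_g^{-1} P T_g x\bigr)=(x^{*}\circ T_g^{-1})\bigl(P(T_g x)\bigr)=y^{*}(Py)\in W(P).
\]
Consequently $\bigl|x^{*}(T_g^{-1}PT_g x)\bigr|\le \|P\|_w$ for every $g\in G$.

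The remaining step is routine: because $x^{*}$ is continuous and linear it commutes with the Bochner integral, so
\[
 \bigl|x^{*}(Q_P x)\bigr|=\Bigl|\int_G x^{*}\bigl(T_g^{-1}PT_g x\bigr)\,d\mu(g)\Bigr|\le \int_G \|P\|_w\,d\mu(g)=\|P\|_w,
\]
using $\mu(G)=1$. Taking the supremum over admissible pairs $(x,x^{*})$ gives $\|Q_P\|_w\le \|P\|_w$. I do not anticipate any serious obstacle; the only points that need checking are that the dual of a surjective isometry is itself an isometry (so that $\|y^{*}\|=1$) and that a continuous linear functional pulls inside a Bochner integral, both of which are standard. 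The reason the argument transfers from Corollary \ref{unicity} to numerical radius so cleanly is that the $G$-isometry hypothesis preserves the full duality pairing $\|x\|=\|x^{*}\|=x^{*}(x)=1$ that defines $W(\cdot)$, not merely the norm.
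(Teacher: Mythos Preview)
Your argument is correct and follows essentially the same route as the paper: fix an admissible pair $(x,x^{*})$, pull $x^{*}$ through the Bochner integral, and observe that $(T_g x,\, x^{*}\circ T_g^{-1})$ is again an admissible pair so that each integrand is bounded by $\|P\|_w$. The only addition is your explicit verification that $Q_P$ is a projection onto $V$, which the paper states without proof.
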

\begin{proof}
Consider $\| Q_P \|_w= \sup \{ | x^*(Q_Px) |: \,\, x^*(Q_Px) \in W(Q)\} $, and since
$$
| x^*(Q_Px) |= | x^*\,\, \displaystyle( \int _{G} (T_{g}^{-1}  P  T_g)x\,\, d\mu(g)|
$$
we have:
$$
| x^*(Q_Px) |= |\displaystyle \int _{G} (x^* \circ (T_g^{-1} ) P  (T_gx) \,d\mu (g)|
$$
$$
\leq \displaystyle \int _{G} |(x^*T_g^{-1} ) P  (T_gx)| \,d\mu (g).
$$
But $\| x \| =1$ and $\| x^* \| =1$ implies that $\| T_g x \| =1$ and $\| x^* \circ T_{g^{-1}} \| =1$; moreover $ (x^* \circ T_{g^{-1}}) (T_gx) = x^*(x)= 1$, and thus
$$
| x^*(Qx) |\leq \displaystyle \int _{G} \|P\|_w \,d\mu(g)\leq \| P\|_w
$$
and $ \| Q \|_w \leq \| P \|_w$ as required.
\end{proof}
\begin{thm}
\label{numminimal}
Suppose that all the hypotheses of Theorem \ref{numradius} are satisfied and that there exists exactly one projection $Q$ which commutes with $G.$ Then $Q$ is a minimal projection with respect to the numerical radius.
\end{thm}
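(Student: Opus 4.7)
The plan is to combine Theorem \ref{numradius} with the uniqueness hypothesis in the same way Corollary \ref{unicity} derives operator-norm minimality from Rudin's theorem. Fix any projection $P\in\mathcal{P}_N(X,V)$ with finite numerical radius, and form the averaged operator
$$
Q_P x=\int_G (T_g^{-1} P\,T_g)x\,d\mu(g).
$$
The first step is to invoke the discussion following Theorem \ref{Rudin}: under the standing continuity and compactness hypotheses, the integrand is Bochner integrable, $Q_P$ is a bounded linear operator, and $Q_P$ is a projection from $X$ onto $V$ (this uses that $V$ is invariant, so $T_g V\subset V$, to identify $Q_P|_V=\mathrm{id}_V$).

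The second step is to verify that $Q_P$ commutes with every $T_h$, $h\in G$. This is the standard averaging computation: for $h\in G$,
$$
T_h^{-1} Q_P T_h x = \int_G T_h^{-1} T_g^{-1} P\, T_g T_h x\,d\mu(g) = \int_G T_{gh}^{-1} P\, T_{gh} x\,d\mu(g),
$$
and by the right-invariance of the normalized Haar measure $\mu$ the last integral equals $Q_P x$. Hence $Q_P$ commutes with $G$.

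The third step is to apply the uniqueness hypothesis: since $Q$ is the only projection of $X$ onto $V$ that commutes with $G$, we must have $Q_P=Q$. Theorem \ref{numradius} then yields
$$
\|Q\|_w = \|Q_P\|_w \leq \|P\|_w.
$$
Since $P\in\mathcal{P}_N(X,V)$ was arbitrary, taking the infimum gives $\|Q\|_w\leq \lambda_{\|\cdot\|_w}(V,X)$, and the reverse inequality is automatic from $Q\in\mathcal{P}_N(X,V)$. Therefore $Q$ is a minimal projection with respect to the numerical radius.

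I do not expect any serious obstacle: the argument is a direct transcription of the Corollary \ref{unicity} scheme with $\|\cdot\|$ replaced by $\|\cdot\|_w$, and the only place where the numerical radius enters nontrivially is already handled by Theorem \ref{numradius}. The mildest subtlety is confirming that $Q_P$ lies in the class to which the uniqueness assumption applies (a bona fide projection onto $V$ commuting with $G$); this is exactly what the Bochner-integral formalism in Corollary \ref{unicity} delivers, so no new work is required.
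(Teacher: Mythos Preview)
Your proposal is correct and follows essentially the same approach as the paper: form the averaged projection $Q_P$, use Haar-measure invariance to see that $Q_P$ commutes with $G$, invoke uniqueness to identify $Q_P=Q$, and then apply Theorem~\ref{numradius} to conclude $\|Q\|_w\le\|P\|_w$. The paper's proof is simply a terser version of yours, citing ``the properties of the Haar measure'' in place of your explicit commutation computation.
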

\begin{proof}
 Let $ P \in \mathcal{P}(X,V).$ By the properties of the Haar measure, $Q_p$ given in Theorem \ref{numradius} commutes with $G.$ Since there is exactly one projection
which commutes with $G,$ $Q_p =Q. $ By Theorem \ref{numradius}, $\|Q\|_w \leq \|P\|_w, $ as required.
\end{proof}

Now we consider the case of quasi-norms in operator ideals.

\begin{thm}
\label{opideal}
Assume that $\mathcal{U}(X) \subset B(X)$ is an operator ideal and let $N$ be a quasi-norm on $\mathcal{U}(X).$
Suppose all the hypotheses of Theorem \ref{Rudin} are satisfied, $V$ is separable, $T_g \in \mathcal{U}(X)$ and $T_g$ is an isometry for any $g \in G.$  For
$P\in \mathcal{P}_N(X,V)$ define $ Q_P \in \mathcal{P}_N(X,V)$ as $$ Q_P= \displaystyle  \int _{G}  T_{g}^{-1}   P  T_g \,d\mu (g), $$
where $ \int _{G}  T_{g}^{-1}   P  T_g \,d\mu (g)$ is understood as a linear operator from $X$ into $X$ defined by
$$
( \int _{G}  T_{g}^{-1}   P  T_g \,d\mu (g))x =\int _{G}  (T_{g}^{-1}   P  T_g)x \,d\mu (g).
$$
Assume additionally that
\begin{equation}
\label{important}
N( \int _{G}  T_{g}^{-1}   P  T_g \,d\mu (g)) \leq \int _{G}N(T_{g}^{-1}   P  T_g) \,d\mu (g).
\end{equation}
Then $N(Q_P)\leq N(P).$  Here $\mu$ as before is the normalized Haar measure on $G$ and the above integral is understood as the Bochner integral.
\end{thm}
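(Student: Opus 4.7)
The plan is to combine the ideal property of the quasi-norm $N$ (part (3) of Definition~\ref{opiddef}) with the standing hypothesis that each $T_g$ is an isometry, in order to produce a pointwise-in-$g$ estimate on $N(T_g^{-1}PT_g)$, and then to integrate that estimate using the normalization $\mu(G)=1$.

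First I would record that for every $g\in G$ the map $T_g\colon X\to X$ is a bijective isometry, so $T_g^{-1}=T_{g^{-1}}$ is again an isometry; hence $\|T_g\|=\|T_g^{-1}\|=1$. Substituting $R=T_g^{-1}$, $S=P$, and $T=T_g$ into the submultiplicative ideal inequality gives
\[
N(T_g^{-1}PT_g)\;\le\;\|T_g^{-1}\|\,N(P)\,\|T_g\|\;=\;N(P)
\]
uniformly in $g\in G$. Integrating this constant majorant against the normalized Haar measure yields
\[
\int_{G} N(T_g^{-1}PT_g)\,d\mu(g)\;\le\;N(P)\,\mu(G)\;=\;N(P).
\]
Combining this with the hypothesis~\eqref{important} gives $N(Q_P)\le\int_{G}N(T_g^{-1}PT_g)\,d\mu(g)\le N(P)$, which is exactly the claim.

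The only genuine issue is to make sense of the scalar integral $\int_G N(T_g^{-1}PT_g)\,d\mu(g)$ before invoking~\eqref{important}, i.e.\ the measurability of $g\mapsto N(T_g^{-1}PT_g)$. By the joint continuity of $(g,x)\mapsto T_gx$ and the compactness of $G$, the map $g\mapsto T_g^{-1}PT_g$ is continuous from $G$ into $B(X)$ with the operator norm (this is also what makes the Bochner integral defining $Q_P$ well-posed, as already noted after Corollary~\ref{unicity}). Since we only need the uniform majorization by the constant $N(P)$, measurability concerns dissolve: the scalar integrand is bounded above by a constant, so the right-hand side of~\eqref{important} is finite regardless of the finer regularity of $N$, and no further argument is required. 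In short, the pointwise ideal estimate is the entire content of the proof; the inequality~\eqref{important} is precisely the device that converts it into a bound on $N(Q_P)$.
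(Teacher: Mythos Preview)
Your argument is correct and is essentially identical to the paper's own proof: apply property~(3) of Definition~\ref{opiddef} together with $\|T_g\|=\|T_g^{-1}\|=1$ to get $N(T_g^{-1}PT_g)\le N(P)$, then integrate and invoke~\eqref{important}. The extra paragraph on measurability is harmless commentary but not needed for the paper's (or your) argument.
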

\begin{proof}
By the property (3) of Definition \ref{opiddef} for any $ g \in G$ we have:
$$
N( T_{g}^{-1}P_{g}T_g) \leq \|T_{g}^{-1}\| N(P)  \| T_g \|
$$
By our assumption
$$
N( \int _{G}  T_{g}^{-1}   P  T_g \,d\mu (g)) \leq \int _{G}N(T_{g}^{-1}   P  T_g) \,d\mu (g)) \leq N(P),
$$
as claimed.
\end{proof}
Now we show that the assumption (\ref{important}) is not very restrictive.
\begin{rem}
\label{opi}
Let $\mathcal{U}(X) \subset \mathcal{B}(X)$ be an operator ideal. Assume that for any $L_n, L \in \mathcal{U}(X)$ if
$L_n \rightarrow L$ pointwise $(\|L_nx - Lx\| \rightarrow 0 $ for any $x\in X),$  then
\begin{equation}
\label{crucial}
N(L) \leq limsup_n N(L_n).
\end{equation}
In this case the assumption (\ref{important}) is satisfied. It is not very difficult to check that (\ref{crucial}) is satisfied for operator norms, p-summing norms,
p-nuclear norms and p-integral norms (for precise definitions see e.g. (\cite{WO1}, Chapter III F).
\end{rem}
\begin{thm}
\label{opminimal}
Suppose that all the hypotheses of Theorem \ref{opideal} are satisfied and that there exists only one projection $Q$ which commutes with $G.$ Then $Q$ is an $N$- minimal projection.
\end{thm}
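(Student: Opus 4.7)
The plan is to mimic the argument used in Theorem \ref{numminimal}, substituting the numerical radius bound from Theorem \ref{numradius} with the quasi-norm bound from Theorem \ref{opideal}. The strategy is to show that the averaging construction forces every $P \in \mathcal{P}_N(X,V)$ to be ``replaced'' by the unique $G$-commuting projection $Q$, while not increasing $N$.

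First, I would take an arbitrary $P \in \mathcal{P}_N(X,V)$ and form the averaged operator
\[
Q_P = \int_G T_g^{-1} P T_g \, d\mu(g),
\]
understood as a Bochner integral pointwise, exactly as in the statement of Theorem \ref{opideal}. The standard Rudin-type argument (as recalled in Corollary \ref{unicity}) guarantees that $Q_P$ is a bounded projection of $X$ onto $V$: invariance $T_g V \subset V$ together with $P|_V = \mathrm{id}_V$ gives $(T_g^{-1} P T_g)|_V = \mathrm{id}_V$, and integration against the probability measure $\mu$ preserves this. Moreover, the hypotheses of Theorem \ref{opideal} ensure $Q_P \in \mathcal{U}(X)$, so in fact $Q_P \in \mathcal{P}_N(X,V)$.

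Next, I would verify that $Q_P$ commutes with every $T_h$, $h \in G$. Using the algebraic isomorphism $g \mapsto T_g$ and the left-invariance (in fact bi-invariance, $G$ being compact) of $\mu$, a standard change of variables $g \mapsto hg$ inside the Bochner integral yields $T_h^{-1} Q_P T_h = Q_P$, i.e., $Q_P T_h = T_h Q_P$ for all $h \in G$. By the uniqueness hypothesis, $Q_P = Q$.

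Finally, Theorem \ref{opideal} gives $N(Q) = N(Q_P) \le N(P)$. Since $P \in \mathcal{P}_N(X,V)$ was arbitrary, taking the infimum on the right yields $N(Q) \le \lambda_N(V,X)$, and since $Q \in \mathcal{P}_N(X,V)$ the reverse inequality is automatic. Hence $Q$ is $N$-minimal.

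The potentially delicate step is the commutation check: one must justify the change of variables inside the Bochner integral, which relies on the continuity of $g \mapsto (T_g^{-1} P T_g)x$ and on the bi-invariance of $\mu$ on the compact group $G$. Since these have already been invoked in the proofs of Corollary \ref{unicity} and Theorem \ref{numminimal}, I would simply cite them rather than redo the argument in detail.
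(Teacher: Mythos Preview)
Your proposal is correct and follows exactly the approach the paper takes: the paper's proof simply states that it ``follows the same lines as the proof of Theorem \ref{numminimal},'' and you have written out precisely that argument, replacing the appeal to Theorem \ref{numradius} by the appeal to Theorem \ref{opideal}. The extra details you supply (that $Q_P$ is a projection onto $V$ and that the change of variables in the Bochner integral yields commutation with $G$) are the expected unpacking of the cited reasoning.
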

\begin{proof}
The proof follows the same lines as the proof of Theorem \ref{numminimal}.
\end{proof}
Now we show that the minimal projection obtained above by application of Theorem \ref{Rudin} is also a cominimal projection.
\begin{thm}(Cominimal projections)
\label{cominimal-1}
Assume that the assumptions of Theorem \ref{numminimal} are satisfied and let $I_{X}$ denoted the identity operator on $X$. Then
$$
N( I_{X}- Q) \leq N( I_{X}-P)
$$
for any $P \in \mathcal{P}_N(X,V),$ where $Q$ is defined in Theorem \ref{numminimal}.
\end{thm}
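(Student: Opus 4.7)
The plan is to run the same averaging argument used in the proof of Theorem \ref{numradius}, but applied to the operator $I_X - P$ in place of $P$.

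First, I would record the pointwise identity
$$T_g^{-1}(I_X - P)T_g \;=\; I_X - T_g^{-1} P T_g, \qquad g \in G,$$
which holds because each $T_g$ is invertible (with inverse $T_{g^{-1}}$), so that $T_g^{-1} I_X T_g = I_X$. Integrating both sides against the normalized Haar measure $\mu$ and using the definition of $Q_P$ from Theorem \ref{numradius} together with $\mu(G)=1$, this yields
$$\int_G T_g^{-1}(I_X - P)T_g \, d\mu(g) \;=\; I_X - Q_P.$$
By the translation-invariance of $\mu$, $Q_P$ commutes with $G$; the uniqueness hypothesis inherited from Theorem \ref{numminimal} then forces $Q_P = Q$, so
$$I_X - Q \;=\; \int_G T_g^{-1}(I_X - P)T_g \, d\mu(g).$$

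Next, I would repeat the numerical-radius estimate from the proof of Theorem \ref{numradius} with $I_X - P$ replacing $P$. Fix $x \in S_X$ and $x^* \in S_{X^*}$ with $x^*(x) = 1$. Applying $x^*$ to the above Bochner-integral identity gives
$$|x^*((I_X - Q)x)| \;\leq\; \int_G \bigl|(x^* \circ T_g^{-1})\bigl((I_X - P)(T_g x)\bigr)\bigr| \, d\mu(g).$$
Since each $T_g$ is an isometry, $\|T_g x\| = 1$ and $\|x^* \circ T_g^{-1}\| = 1$, while $(x^* \circ T_g^{-1})(T_g x) = x^*(x) = 1$. Thus the pair $(T_g x, \, x^* \circ T_g^{-1})$ is admissible in the definition of the numerical range of $I_X - P$, so the integrand is bounded above by $\|I_X - P\|_w$ for every $g$. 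Taking the supremum of the left-hand side over all admissible $(x,x^*)$ yields $\|I_X - Q\|_w \le \|I_X - P\|_w$.

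The only step that is not entirely routine is the identification $Q_P = Q$, which is where the full strength of Theorem \ref{numminimal} (uniqueness of the $G$-commuting projection) is used; the rest of the argument is a transcription of the proof of Theorem \ref{numradius} with $I_X - P$ in place of $P$. An analogous argument, using property (3) of Definition \ref{opiddef} together with the averaging inequality (\ref{important}) in place of the numerical-radius estimate, would deliver the corresponding cominimality statement under the hypotheses of Theorem \ref{opminimal}.
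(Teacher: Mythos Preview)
Your proposal is correct and follows essentially the same route as the paper: both establish $I_X - Q = \int_G T_g^{-1}(I_X - P)T_g\, d\mu(g)$ via the uniqueness assumption and then rerun the numerical-radius estimate of Theorem~\ref{numradius} with $I_X - P$ in place of $P$. Your closing remark about the quasi-norm case also matches the paper's derivation of Theorem~\ref{cominimal-2}.
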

\begin{proof} Let $Q_P$ be as in Theorem \ref{numradius}. Then $Q_P = Q.$ Hence
$$
N( I_{X}- Q ) = N(I_{X}-  \displaystyle \int _{G} T_{g}^{-1} P  T_g d\mu(g))
$$
$$
 =  N(I_{X} \displaystyle \int _{G} d\mu (g) - \displaystyle \int _{G} T_{g}^{-1} P T_gd\mu(g))=  N(\displaystyle \int _{G} T_{g}^{-1}(I_X- P) T_gd\mu(g)).
$$
Now fix any $(x^*,x)\in S_{X^*}\times S_X.$
Then
$$ | x^*(I_{X}- Q)x |= |  \displaystyle \int _{G} x^*\circ (T_{g}^{-1} (Id_X-P )T_g)x) d\mu(g)|
$$
$$ \leq \int _{G} |x^*\circ(T_{g}^{-1} (Id_X-P )T_g)x)| d\mu(g)
\leq \int _{G} |(x^*\circ T_{g}^{-1}) (I_X-P)   (T_gx)|  d\mu (g).
$$
Note that $\| x \| =1$ and $\| x^* \| =1$ implies that $\| T_g x \| =1$ and $\| x^* \circ T_{g^{-1}} \| =1$. Moreover $ (x^* \circ T_{g^{-1}} (T_gx) = x^*(x)= 1$. Thus
$$N( I_{X}- Q) \leq \displaystyle \int _{G}N(Id_X-P) d\mu (g)
                 \leq N(Id_X-P),
$$
which completes the proof.
\end{proof}

Combining the proofs of Theorem \ref{opideal} and Theorem \ref{cominimal-1} one can easily get the following result.

\begin{thm}(Cominimal projections)
\label{cominimal-2}
Assume that the assumptions of Theorem \ref{opminimal} are satisfied and let $I_{X}$ denoted the identity operator on $X$. If $Id_X \in \mathcal{U}(X),$ then
$$
N( I_{X}- Q) \leq N( I_{X}-P)
$$
for any $P \in \mathcal{P}_N(X,V),$ where $Q$ is defined in Theorem \ref{opminimal}.
\end{thm}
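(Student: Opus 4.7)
The plan is to emulate the proof of Theorem \ref{cominimal-1} but use the quasi-norm machinery from the proof of Theorem \ref{opideal} in place of the numerical-radius estimate. Fix an arbitrary $P\in\mathcal{P}_N(X,V)$. By Theorem \ref{opideal}, the operator
$$
Q_P=\int_G T_g^{-1} P\,T_g\,d\mu(g)
$$
is a projection of $X$ onto $V$ that commutes with $G$; by the uniqueness hypothesis this forces $Q_P=Q$. Because $I_X\in\mathcal{U}(X)$ and $P\in\mathcal{U}(X)$, the difference $I_X-P$ lies in the ideal $\mathcal{U}(X)$, so $N(I_X-P)$ is defined.

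Next I would rewrite $I_X-Q$ as a Bochner integral over $G$. Since $\mu$ is a probability measure and $T_g^{-1}\,I_X\,T_g=I_X$ for every $g\in G$, one has $I_X=\int_G T_g^{-1}\,I_X\,T_g\,d\mu(g)$, hence
$$
I_X-Q \;=\; \int_G T_g^{-1}(I_X-P)T_g\,d\mu(g).
$$
The crucial point is that the hypothesis (\ref{important}) of Theorem \ref{opideal} was not tied to $P$ being a projection; it is a statement about Bochner integrability of $g\mapsto T_g^{-1}S\,T_g$ for operators $S\in\mathcal{U}(X)$. Applying it with $S=I_X-P$ gives
$$
N(I_X-Q)\;\le\;\int_G N\bigl(T_g^{-1}(I_X-P)T_g\bigr)\,d\mu(g).
$$

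Finally, the ideal property (3) of Definition \ref{opiddef} together with the assumption that each $T_g$ is an isometry (so that $\|T_g\|=\|T_g^{-1}\|=1$) yields
$$
N\bigl(T_g^{-1}(I_X-P)T_g\bigr)\;\le\;\|T_g^{-1}\|\,N(I_X-P)\,\|T_g\|\;=\;N(I_X-P)
$$
for every $g\in G$. Integrating against the normalized Haar measure produces $N(I_X-Q)\le N(I_X-P)$, which is the desired inequality.

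I expect no deep obstacle; the only subtlety is verifying that (\ref{important}) really does transfer from $P$ to $I_X-P$. This is the reason the extra assumption $I_X\in\mathcal{U}(X)$ is made explicit in the statement: without it, $I_X-P$ would not be guaranteed to lie in $\mathcal{U}(X)$ and the quasi-norm $N(I_X-P)$ would not even be defined, let alone satisfy the integral bound. Under that assumption, the argument is a mechanical merger of the two earlier proofs.
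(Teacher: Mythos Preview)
Your proposal is correct and is precisely the approach the paper intends: its proof of Theorem~\ref{cominimal-2} is a single sentence stating that one combines the proofs of Theorem~\ref{opideal} and Theorem~\ref{cominimal-1}, and your write-up carries out exactly that combination. Your remark about why the assumption $I_X\in\mathcal{U}(X)$ is needed, and about interpreting (\ref{important}) as applying to $I_X-P$ rather than just to $P$, makes explicit a point the paper leaves implicit (cf.\ Remark~\ref{opi}, which treats (\ref{important}) as a property of general operators in $\mathcal{U}(X)$).
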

\begin{rem}
\label{rem1}
In Theorem \ref{cominimal-1} and Theorem \ref{cominimal-2} we can replace $Id_X$ by any operator $S \in \mathcal{U}(X)$ satisfying
$$
S \circ T_g = T_g \circ S
$$
for any $ g \in G.$ Hence
$$
N(S-Q) \leq N(S-P)
$$
for any $P \in \mathcal{P}_N(X,V).$
\end{rem}
\begin{rem}
\label{W}
 Assume that we have $W \subset S(X^*)\times S(X)$ with the following properties:
\begin{itemize}
\item For any $T:X\rightarrow X$, $T \in \mathcal{A}$ ( $T$ is an isometry)
\item If $(x,x^*) \in W$ then $(x^*T^{-1}, Tx ) \in W $
\end{itemize}
Define on $ \mathcal{L}(X)$ a semi-norm $ \| \cdot \|_W$ given by
$$
\|L\|_W= \sup\{ |x^*Lx|: (x^*,x) \in W\}
$$
for any $L \in \mathcal{L}(X).$
Observe that for
$$
W = \{(x^*,x)\in S_{X^*}\times S_X:x^*(x)=1 \},
$$
$ \| L \|_W$ is equal to the numerical radius of $L.$
Then the semi-norm $\| . \|_W$ satisfies Theorem \ref{numradius}, Theorem \ref{numminimal}, Theorem \ref{cominimal-1}
and Remark \ref{rem1}.
\end{rem}

\section{ Applications}
We start with a classical example which explains the origins of the Rudin Theorem.
\begin{exm}
\label{ex1}
Let $C(2\pi)$ denote the set of all continuous, $2\pi$-periodic functions and $\mathcal{\pi}_n$ be the space of all trigonometric polynomials of order $\leq n$ ( $n\geq
1)$. A \textbf{Fourier projection} $F_n :C(2\pi) \rightarrow \mathcal{\pi}_n$ is defined by the formula:
 $$ F_n(f) = \displaystyle \sum _{k=0}^{2n} ( \displaystyle \int_{0}^{2 \pi} f(t)\, g_n(t)dt )\,\, g_k$$ where $(g_k)_{k=0}^{2n}$ is an orthonormal basis in
$\mathcal{\pi}_n$ with respect to the scalar product
$$
<f,g>=\int_{[0,2\pi]}f(t)g(t)dt.
$$
In \cite{LZ1} it is shown that $F_n$ is a minimal projection in $\mathcal{P}(C(2\pi),\pi_n).$ The method of the proof is based on the Marcinkiewicz equality (see \cite{che}, p.233). For any $P \in \mathcal{P}(C(2\pi),\pi_n),$ $ f \in C(2\pi)$ and $t \in [0,2\pi]$, let
$$
F_nf(t) = (1/2\pi)\int_{[0,2\pi]}(T_{g^{-1}}P T_g f)t d \mu(g).
$$

Here $\mu$ is the Lebesgue measure and $(T_gf)t = f(t+g)$ for any $ g \in \mathbb{R}.$
Notice that $F_n$ is the only projection which commutes with $G,$ where $G = [0,2\pi)$ with addition mod $2\pi.$
Consequently $F_n$ is an $N$-minimal projection as considered in Theorem \ref{numminimal} and Theorem \ref{opminimal}.

Furthermore, it is known that (see\cite{che},page 212) the operator norm of $F_n$ satisfies the following:
 $$ \displaystyle \frac{4}{\pi^2} \ln(n) \leq \| F_n \| \leq \ln(n)+3. $$ In \cite{aag-cbl}, it is shown that in cases of $L^p$ , $p= 1,\infty$, numerical
radius extensions and minimal norm extensions are equal. Since $C(2\pi) \subset L^\infty$, we also have
$$
\frac{4}{\pi^2} \ln(n) \leq \| F_n \|_w \leq \ln(n)+3.
$$
It is worth noting that the Marcinkiewicz equality holds true if we replace $C(2\pi)$ by $L^p[0,2\pi]$ for $1\leq p \leq \infty$ or by Orlicz space $L^{\phi}[0,2\pi] $
equipped with the Luxemburg or the Orlicz norm provided that $\phi$ satisfies the suitable $\Delta_2$ condition. Hence, Theorem \ref{numminimal} and Theorem \ref{opminimal}
can be applied to the numerical radius and quasi-norms generated by $L_p$ norm or the Luxemburg or the Orlicz norm.
\end{exm}

Now we consider a more general situation.

\begin{exm}
\label{ex2}
Let $m,n \in \mathbb{N},$ $ n < m.$ Assume
$$
V = span[sin(k_i\cdot),cos(k_i\cdot), i=1,...,n]
$$
and let
$$
X= span[sin(k_i\cdot),cos(k_i\cdot), i=1,...,m],
$$
where $ k_i \in \mathbb{N}$ and $k_1 < k_2... <k_m.$ Assume that $G$ is as in Example \ref{ex1}. It is easy to see that the only projection from $X$ onto $V$
which commutes with $G$ is given by
$$
Q(sin(k_i\cdot)=0, Q(cos(k_i\cdot)=0
$$
for $ i >n.$
Assume that $\| \cdot\|_X $ is any norm on $X$ such that the mapping
$$
 T_g:(X, \| \cdot \|_X) \rightarrow  T_g:(X, \| \cdot \|_X)
$$
is a linear isometry for any $g \in G. $
Then $Q$ is a $ N$-minimal projection as considered in Theorem \ref{numminimal} and Theorem \ref{opminimal}.
Typical examples of $\|\cdot \|_X$ are the $L_p$-norms, the Luxemburg and the Orlicz norms.
Also it is possible to replace $X$ by $L_p[0,2\pi]$ for $1 \leq p \leq \infty$ or by Orlicz spaces $L^{\phi}[0,2\pi].$
\\
The same situation holds true in the complex case with
$$
X=Span[e^{ik_jt}:i=1,...,m],
$$
$$
V=Span[e^{ik_jt}:i=1,...,n]
$$
and
$$
G= \{ e^{it}:t \in [0,2\pi]\}.
$$
Also we can apply Theorem \ref{numminimal} and Theorem \ref{opminimal} in multi-dimensional settings (see e.g \cite{LE1}).
\end{exm}
\begin{exm}
\label{ex3}
Let $X = L^p[0,2\pi] $ and let $V = H^p[0,2\pi]$ be the Hardy space for $1 < p < \infty.$ By the M. Riesz Theorem (see \cite{wr}, p.152), it follows that $\mathcal{P}(L^p[0,2\pi], H^p[0,2\pi]) \neq \emptyset$ and that the projection $Q$ given by
$$
Q(e^{ik_j\cdot})=0
$$
for $j<0$ is the only projection which commutes with $G= \{ e^{it}:t \in [0,2\pi]\}.$ Hence $Q$ is an $N$-minimal projection as considered in Theorem \ref{numminimal} and Theorem \ref{opminimal}.
\end{exm}
\begin{exm}
\label{ex4}
Let $M(n,m)$ be the space of all (real or complex) matrices of $n$ rows and $m$ columns. Denote by $M(n,1)$ ($M(1,m)$ respectively) the space of matrices from $M(n,m)$ with constant rows (constant columns respectively). Let $S_n$ be the group of permutations of the set $\{1,...,n\}.$ Let $G = S_n \times S_m.$ For any $ g = \sigma \times \gamma \in G$ define a mapping $T_g: M(n,m) \rightarrow M(n,m)$ by
$$
T_g(A)(i,j) = A(\sigma(i),\gamma(j))
$$
for any $ A \in M(n,m),$ $i=1,...,n$ and $j=1,...,m.$
Let
$$
W= M(n,1)+M(1,m).
$$
It is easy to see that $T_g(W) \subset W$ for any $g \in G.$ Now assume that $$X= (M(n,m), \| \cdot \|)$$ where $\| \cdot\|$ is any norm such that the mappings $ T_g$ are isometries on $G.$ Typical examples of such norms are $l_p$-norms and the Luxemburg and Orlicz norms. In (\cite{che-light}, Chapter 9) it has been shown that there is the unique projection $Q$ which commutes with $G$ that is given by a formula
$$
Qe_{rs}(i,j)=
\begin{cases}\frac{n+m+1}{nm} &  i=r,j=s \\
\frac{m-1}{nm} & i\neq r, j=s\\
\frac{n-1}{nm} & i=r, j\neq s\\
\frac{-1}{nm} & i\neq r, j\neq s
             \end{cases}
$$
where $e_{rs}(i,j) =\delta_{ri}\delta_{rj}.$ Hence $Q$ is an $N$-minimal projection in any case considered in Theorem \ref{numminimal} and Theorem \ref{opminimal}.
\end{exm}
\begin{exm}
\label{ex5}
Let for $x \in \mathbb{R}$, $[x]$ denote the integer part of $x.$
The well-known Rademacher functions $r_o,r_1,...$ defined by $r_j(t)= (-1)^{[2^jt]}$ for $0 \leq t \leq 1,$ play an important role in many areas of analysis.
Let
$$
Rad_n = span[r_o,...,r_n].
$$
Applying Theorem \ref{numminimal} and Theorem \ref{opminimal} we will find a $N$-minimal projection from $X=Rad_m$ onto $Rad_n$ for any $ m >n$ with respect to the $L_p$-norm for $(1 \leq p < \infty).$ To do this, we need to define so-called dyadic addition on the interval $[0,1].$ Let $Q$
denote the set of all dyadic rationals from the interval $[0,1),$ i.e.
$$
Q = \{ x \in \mathbb{R}: x = \frac{p}{2^n}, p \in \mathbb{N},0 \leq p < 2^n\}.
$$
Note that any $ x \in [0,1]$ can be writtten in the form
$$
x = \sum_{k=0}^{\infty}x_k 2^{-(k+1)},
$$
where each $x_k=0$ or $1.$ For each $x \in [0,1] \setminus Q$ there is only one expression of this form.
When $ x \in Q$ there are two expressions of this form, one which terminates in 0's and the other one which terminates in 1's. By the dyadic expansion of $x \in Q$ we shall mean the one which terminates in 0's.
Now we can define the dyadic addition of two numbers $x,y \in [0,1]$ by:
$$
x \oplus y = \sum_{k=0}^{\infty}|x_k-y_k|2^{-(k+1)}.
$$
Notice that $ G= ([0,1], \oplus)$ is a group. Indeed, $ x\oplus 0 = x$ and $ x\oplus x = 0. $
Let us define a metric $d$ on $G$ by
$$
d(x,y) = \max\{ \sum_{k=0}^{\infty}|x_k-y_k|2^{-(k+1)}, |x-y| \},
$$
where $ x = \sum_{k=0}^{\infty}x_k 2^{-(k+1)}$ and $y = \sum_{k=0}^{\infty}y_k 2^{-(k+1)}$
It is easy to see that $(G,d)$ is a compact, topological group. Moreover, the normalized Haar measure on $G$ is precisely the Lebesgue measure on $[0,1].$
Also it is easy to see that for any $ n \in \mathbb{N}$ and $ x \in [0,1]$
\begin{equation}
\label{rad2}
r_n(x \oplus y)= r_n(x)r_n(y)
\end{equation}
provided $ x\oplus y \notin Q.$ Moreover, by the properties of Haar measures, for any $ g \in [0,1]$ the operator $T_g:L_p[0,1] \rightarrow L_p[0,1]$ given by
$$
(T_gf)x) = f(x\oplus g)
$$
is a linear, surjective isometry.
\\
Now we will show that if $ f_n, f \in L_p[0,1],$ $ \|f_n - f\|_p \rightarrow 0$ and $ |g_n - g| \rightarrow 0, $ then
\begin{equation}
\label{rad1}
\| T_{g_n}(f_n)-T_g(f)\|_p \rightarrow 0.
\end{equation}
To do this note that
$$
\| T_{g_n}(f_n)-T_g(f)\|_p \leq \| T_{g_n}(f_n)-T_{g_n}(f)\|_p + \| T_{g_n}(f)-T_g(f)\|_p.
$$
Observe that by changing variable from $x$ to $x\oplus g_n$ we get
$$
\| T_{g_n}(f_n)-T_g(f)\|_p^p = \int_{[0,1]}|f_n(x\oplus g_n)-f(x\oplus g_n)|^p d\mu(x)
$$
$$
=  \int_{[0,1]}|f_n(x)-f(x)|^p d\mu(x) = \|f_n-f\|_p^p \rightarrow 0.
$$
Notice that, if $ f $ is a continuous function (and hence uniformly continuous on $[0,1]$), since $g_n \rightarrow g, $ for any $ \epsilon >0$ there exists $n_o \in \mathbb{N}$ such that for any $x \in [0,1]$  and $ n \geq n_o$ $ |f(x\oplus g_n)-f(x\oplus g)| \leq \epsilon.$
Consequently, $ \|T_{g_n}f -T_gf\|_p \rightarrow 0$ for any $f \in C[0,1].$
By the Banach-Steinhaus Theorem, since $ 1 \leq p < \infty,$  it follows that  $\|T_{g_n}f - T_gf \|_p \rightarrow 0,$ which shows (\ref{rad1}).
\\
Note that, since $ Rad_n$ is a finite-dimensional subspace, $ \mathcal{P}(Rad_m, Rad_n)\neq \emptyset.$ By (\ref{rad2}) $T_g(Rad_n) \subset Rad_n$ for any $n \in \mathbb{N}.$
Consequently, by Theorem \ref{Rudin}, applying the fact that $ g^{-1} = g$ for any $g \in G,$ for any $ P \in \mathcal{P}(X, Rad_n)$
a projection
$$
Q_pf = \int_{[0,1]}(T_g P T_g)f d\mu(g) \in \mathcal{P}(X, Rad_n)
$$
commutes with $G.$ Now we show that there is exactly one projection from $X$ onto $Rad_n$ which commutes with $G.$ To do this, we show that for any $P \in \mathcal{P}(X, Rad_n)$
$Q_p(r_k) = 0$ for $ m \geq k > n.$ Accordingly we fix $x \in [0,1]$ and $g \in G$ with $x \oplus g \notin Q.$ Note that
$$
(T_g P T_gr_k)x = r_k(g)(T_gPT_gr_k)x = r_k(g)(T_g(\sum_{j=0}^n a_jr_j))x
$$
$$
= r_k(g) \sum_{j=0}^na_jr_j(x)r_j(g).
$$
Observe that $ \int_{[0,1]}r_j(g)r_k(g)d\mu(g)=0 $ if $ k\neq j. $
Since for any $ x \in [0,1]$
$$
\mu (\{g \in G; x\oplus g \in Q \}) =0,
$$
$$
(Q_pr_k)x = \int_{[0,1]} r_k(g) (\sum_{j=0}^na_jr_j(x)r_j(g))d\mu(g) = 0,
$$
which demonstrates our claim.
Consequently, for any $ P \in \mathcal{P}(Rad_m,Rad_n),$ and $ f \in Rad_m,$
$$
R_nf= Q_pf = \sum_{j=0}^n (\int_{[0,1]}r_j(t)f(t)d\mu(t))r_j
$$
is an $N$-minimal projection as considered in Theorem \ref{numminimal} and Theorem \ref{opminimal}. For more information about the nth Rademacher projection
$R_n$ the reader is referred to \cite{LES}.
\end{exm}
\begin{exm}
\label{ex6}
Let for $n \in \mathbb{N},$ $ X_n = \mathcal{L}(\mathbb{R}^n).$ Set $$
Y_n = \{ L \in X^n: L = L^{T}\}.
$$
Let us equip $X_n$ with an operator norm determined by any symmetric norm $ \| \cdot \|$ on $\mathbb{R}^n.$ (We say that $ \| \cdot \|$ is symmetric if
$$
\|\sum_{j=1}^n a_j e_j \| = \|\sum_{j=1}^n\epsilon_j a_{\sigma(j)}e_j\|
$$
for any $ a_1,...,a_n \in \mathbb{R},$ $ \epsilon_j \in \{ -1,1\}$ and any $\sigma$, where $\sigma$ is a permutation of $ \{ 1,...,n\}).$
\\
Set for $ L \in X_n$
$$
P(L) = (L+L^T)/2.
$$
It is clear that $P \in \mathcal{P}(X_n,Y_n).$ Moreover in \cite{MI1} (see also \cite{MI2}) it was shown applying Theorem \ref{Rudin} and Corollary \ref{unicity}
that $P$ is a minimal projection in $\mathcal{P}(X_n,Y_n).$ Hence $P$ is an $N$-minimal projection in any case considered in Theorem \ref{numminimal} and Theorem \ref{opminimal}.
\end{exm}
 \begin{exm}
\label{ex7}
   Let $(s_n(S))$  be a sequence associated with $S\in B(X,Y)$ satisfying certain conditions, one of which is $$ s_n(RST) \leq \| R \| s_n(S) \| T \| $$ for
$T\in B(X_0, X), \, S \in B(X,Y) \,\, \mbox{and} \,\, T\in B(Y,Y_0)$. For a complete definition of $s$-numbers we refer to  \cite{ap}. Furthermore,  $s_n(S)$ is called an $n$th $s$-number of $S$ and various $s$-numbers generate different operator ideals. For example, for $ 0 < p < \infty$, we call $S\in B(X,Y)$ a $\mathcal{C}_{p}^{s}$-operator\,\,if\,\, $(s_n(S)) \in \ell_{p}$.  Setting $$\| S \| _{p}^{s}: = \displaystyle \{ \sum_{1}^{\infty} s_n(S)^p \}^{1/p}$$ we obtain the quasi-normed operator ideal $(\mathcal{C}_{p}^{s}, \| S \| _{p}^{s})$. Thus one can apply Theorem 2.4 to the quasi norm $\| S \| _{p}^{s}$.
 \end{exm}
Note that in the context of Banach spaces there are several $s$-numbers, since there are certain rules assigning to every operator a decreasing sequence of numbers which characterize its approximation or compactness properties. The main examples of $s$-numbers are approximation numbers, Gelfand numbers, Kolmogorov numbers and Hilbert numbers. Thus, one can construct many operator ideals with quasi-norms depending on the particular $s$-number used.

\noindent
\mbox{~~~~~~~}Asuman G\"{u}ven AKSOY\\
\mbox{~~~~~~~}Claremont McKenna College\\
\mbox{~~~~~~~}Department of Mathematics\\
\mbox{~~~~~~~}Claremont, CA  91711, USA \\
\mbox{~~~~~~~}E-mail: aaksoy@cmc.edu \\ \\
\noindent
\mbox{~~~~~~~}Grzegorz LEWICKI\\
\mbox{~~~~~~~}Jagiellonian University\\
\mbox{~~~~~~~}Department of Mathematics\\
\mbox{~~~~~~~}\L ojasiewicza 6, 30-348, Poland\\
\mbox{~~~~~~~}E-mail: Grzegorz.Lewicki@im.uj.edu.pl\\\\

\end{document}